\documentclass[letter, 11pt]{article}
\usepackage[english]{babel}
\usepackage[utf8]{inputenc}

\usepackage[margin = 1.2 in, top = 1 in, bottom = 1.2 in]{geometry}
\makeatletter
\g@addto@macro\normalsize{%
  \setlength\abovedisplayskip{7pt}
  \setlength\belowdisplayskip{7pt}
  \setlength\abovedisplayshortskip{7pt}
  \setlength\belowdisplayshortskip{7pt}
}
\makeatother

\usepackage{graphicx}
\usepackage{cancel}

\usepackage{enumerate}
\usepackage{scalerel,stackengine}
\usepackage{graphics}
\usepackage{fancyhdr}
\usepackage{cancel}
\usepackage{enumerate}
\usepackage{amssymb}
\usepackage{amsmath}
\usepackage{hyperref}
\usepackage{mdwlist}
\usepackage{etoolbox}
\usepackage{latexsym}
\usepackage{amsthm}
\usepackage{multicol}
\usepackage{makeidx}
\usepackage{bm}
\usepackage{dsfont}
\usepackage{graphicx}
\usepackage{wrapfig}
\usepackage{multicol}
\usepackage{xcolor}
\usepackage{makeidx}
\usepackage{bm}
\usepackage{dsfont}
\usepackage{mdframed,color}
\usepackage{amsmath,amssymb}
\usepackage[dvipsnames]{xcolor}
\usepackage{mathtools}

\usepackage[capitalize]{cleveref}
\usepackage{bbm, dsfont}
\usepackage{tikz}
\usetikzlibrary{matrix}
\usetikzlibrary{positioning}

\usepackage{titlesec}
\titleformat{\section}
  {\large\center\bfseries}
  {\thesection.}{.7em}{}
\titlespacing*{\section}{0pt}{3.5ex plus 0ex minus 0ex}{1.5ex plus 0ex}
\titleformat{\subsection}
  {\center\bfseries}
  {\thesubsection.}{.7em}{}
\titlespacing*{\subsection}{0pt}{3.5ex plus 0ex minus 0ex}{1.5ex plus 0ex}
\titleformat{\subsubsection}
  {\center\bfseries}
  {\thesubsubsection.}{.7em}{}
\titlespacing*{\subsubsection}{0pt}{3.5ex plus 0ex minus 0ex}{1.5ex plus 0ex}

\addto\captionsenglish{}

\usepackage{titling}
\newtheoremstyle{plain}{3mm}{3mm}{\slshape}{}{\bfseries}{.}{.5em}{}
\newtheoremstyle{definition}{2mm}{2mm}{}{}{\bfseries}{.}{.5em}{}
\theoremstyle{plain} 
	
\newtheorem{theorem}{Theorem}[section]

\newtheorem{proposition}[theorem]{Proposition}

\newtheorem{lemma}[theorem]{Lemma}

\theoremstyle{definition} 

\theoremstyle{plain} 
\newcounter{MainTheoremCounter}

\AddToHook{env/problem/begin}{\crefalias{theorem}{problem}}
\AddToHook{env/proposition/begin}{\crefalias{theorem}{proposition}}
\AddToHook{env/question/begin}{\crefalias{theorem}{question}}
\AddToHook{env/conjecture/begin}{\crefalias{theorem}{conjecture}}
\AddToHook{env/lemma/begin}{\crefalias{theorem}{lemma}}
\AddToHook{env/corollary/begin}{\crefalias{theorem}{corollary}}
\AddToHook{env/definition/begin}{\crefalias{theorem}{definition}}
\AddToHook{env/remark/begin}{\crefalias{theorem}{remark}}
\AddToHook{env/example/begin}{\crefalias{theorem}{example}}

\AddToHook{env/maintheorem/begin}{%
  \crefalias{MainTheoremCounter}{theorem}%
}

\theoremstyle{plain}
\newtheorem*{namedthm}{\namedthmname}
\newcounter{namedthm}
	

\crefname{theorem}{Theorem}{Theorems}
\Crefname{theorem}{Theorem}{Theorems}

\crefname{proposition}{Proposition}{Propositions}
\Crefname{proposition}{Proposition}{Propositions}

\crefname{lemma}{Lemma}{Lemmas}
\Crefname{lemma}{Lemma}{Lemmas}

\crefname{corollary}{Corollary}{Corollaries}
\Crefname{corollary}{Corollary}{Corollaries}

\crefname{conjecture}{Conjecture}{Conjectures}
\Crefname{conjecture}{Conjecture}{Conjectures}

\crefname{question}{Question}{Questions}
\Crefname{question}{Question}{Questions}

\crefname{definition}{Definition}{Definitions}
\Crefname{definition}{Definition}{Definitions}

\crefname{remark}{Remark}{Remarks}
\Crefname{remark}{Remark}{Remarks}

\crefname{example}{Example}{Examples}
\Crefname{example}{Example}{Examples}

\crefname{maintheorem}{Theorem}{Theorems}
\Crefname{maintheorem}{Theorem}{Theorems}

\crefname{maincorollary}{Corollary}{Corollaries}
\Crefname{maincorollary}{Corollary}{Corollaries}

\crefname{mainquestion}{Question}{Questions}
\Crefname{mainquestion}{Question}{Questions}

\newcommand{\N}{\mathbb{N}}
\newcommand{\Z}{\mathbb{Z}}
\newcommand{\C}{\mathbb{C}}

\newcommand{\sA}{\mathcal{A}}

\newcommand{\norm}[1]{\left\lVert#1\right\rVert}

\providecommand{\norm}[1]{\lVert #1\rVert}

\usepackage[normalem]{ulem}

\newcommand{\g}{{\mathbf{g}}}
\newcommand{\h}{{\mathbf{h}}}

\newcommand{\cor}{{\mathcal{C}}}
\newcommand{\nil}{{\mathcal{N}}}
\newcommand{\nul}{{\mathbf{e}}}

\makeatletter
\def\thanks#1{%
  \begingroup
  \let\@thefnmark\relax
  \@footnotetext{#1}%
  \endgroup
}

\begin{document}
\author{By~~{\scshape Jovan Andreevski}~~and~~{\scshape Felipe Hern\'andez} }
\date{\small \today}
\thanks{The second author acknowledges OpenAI’s ChatGPT for Academic Researchers program for providing access to ChatGPT during the preparation of this work.}
\title{{\bfseries On the structure of polynomial multicorrelation sequences arising from nilpotent actions}}

\maketitle

\begin{abstract}
We prove that every polynomial multicorrelation sequence arising from nilpotent actions admits a decomposition into a generalized nilsequence and a null-sequence. This resolves a long-standing question of Frantzikinakis concerning the structure of polynomial multicorrelation sequences of commuting transformations. Our proof builds upon ideas underlying the norm convergence theorem of Walsh.
\end{abstract}

\section{Introduction}
A multicorrelation sequence is a sequence of the form
\begin{equation}
\label{eq:proposal-multicorrelation}
    n\mapsto \int f_0 \cdot \left( \prod_{i=1}^{d} T_i^{a_{i,1}(n)} \right) f_1 \cdot \ldots \cdot  \left( \prod_{i=1}^{d} T_i^{a_{i,k}(n)} \right) f_k d\mu
\end{equation}
where $(X,\mu, T_1, \ldots, T_d)$ is a $\Z^d$-measure-preserving system, $f_0,\ldots,f_k\in L^\infty(\mu)$ are bounded functions, and $a_{i,j}\colon\mathbb N\to\mathbb Z$ are sequences for $i=1,\ldots,d$ and $j=1,\ldots,k$. We refer the reader to \cref{preliminaries} for all the necessary background in ergodic theory. Multicorrelation sequences are central objects in ergodic theory. Their systematic study originates in Furstenberg's ergodic proof of Szemer\'edi's theorem \cite{Furstenberg77}. Since then, a series of papers 
\cite{ACKELSBERG_BERGELSON_2024,
Bergelson_Host_Kra05, DonosoFerreMoraguesKoutsogiannisSun2024,
DKKST25, Frantzikinakis2015, Frantzikinakis_Kra_averages_indep_poly:2006,
frantzikinakis2025ergodicaveragessparsecorners,FrantzikinakisKuca2025,    Koutsogiannis18,koutsogiannis2026structure2stepnilpotentergodic,KoutsogiannisLeMoreiraRichter21, 
LE_2020, LeMoreiraRichter2021, Leibman10,Leibman15,leng2025structuredextensions, Moragues2020PropertiesOM,  Moreira_Richter19} has further developed our understanding of multicorrelation sequences, producing important applications in combinatorics, recurrence, and the theory of multiple ergodic averages. The common factor in these results is the search for understanding of the intrinsic structure of multicorrelation sequences, that is, a decomposition into a main component, whose algebraic structure we understand, and an asymptotically negligible error term. The first fundamental result towards this direction was due to Bergelson, Host, and Kra
\cite[Theorem~1.9]{Bergelson_Host_Kra05} establishing that, whenever $(X,\mathcal B,\mu,T)$ is ergodic, 
\begin{equation}
\label{eq:proposal-bhk}
    \int_X f_0\cdot T^n f_1\cdot \ldots \cdot  T^{dn}f_d\,d\mu
    =\nil(n)+\nul(n), \quad n\in \N
\end{equation}
where $\nil$ is a $d$-step generalized nilsequence and $\nul$ is a null-sequence. Here, a \textit{generalized nilsequence} $\nil$ is a uniform limit of nilsequences. A \textit{nilsequence} $\psi:\Z\to \C$ is a sequence of the form $\psi=F(g^n\Gamma)$ where $F:G/\Gamma\to \C$ is a smooth function, $g\in G$, and $G/\Gamma$ is a \textit{nilmanifold}, meaning that $G$ is a nilpotent Lie group and $\Gamma$ is a discrete cocompact subgroup of $G$. We say that $\psi$ has degree $\ell$ if $G$ is at most $\ell$-step nilpotent. Accordingly, we say that a generalized nilsequence $\nil$ has degree $\ell$ if it is the uniform limit of $\ell$-step nilsequences. For $\varepsilon\geq 0$, an $\varepsilon$\textit{-null-sequence} $\nul_\varepsilon: \Z\to \C$ is a sequence satisfying
$$\limsup_{N-M\to\infty} \frac{1}{N-M} \sum_{n=M+1}^N |\nul_\varepsilon(n)|^2 \leq \varepsilon.  $$
A \textit{null-sequence} $\nul:\Z\to \C$ is a $0$-null-sequence. 

Leibman extended the decomposition in \cref{eq:proposal-bhk} to integer polynomial iterates \cite{Leibman10}, and subsequently removed the ergodicity assumption \cite{Leibman15}.


 In the setting of commuting transformations, the situation is considerably less clear. Although mean convergence of multiple ergodic averages for commuting transformations was established for linear iterates by Tao \cite{Tao08}, and subsequently for polynomial iterates by Walsh \cite{Walsh12}, these convergence results do not yield a structural description of the associated multicorrelation sequences. A first general decomposition result was proven by Frantzikinakis \cite[Theorem 1.2]{Frantzikinakis2015}. He established that for every $d,k\in \N$, any polynomials $p_{i,j}\in \Z[t]$, $i\in [d]$, $j\in [k]$, there exists $\ell\in \N$ such that for every $\Z^{d}$-measure-preserving system $(X,\mathcal{B},\mu,T_1,\ldots,T_{d})$, functions $f_1,\ldots,f_k\in L^\infty(\mu)$ bounded by $1$, and $\varepsilon>0$ one has the decomposition 
\begin{equation}\label{Frantzikinakis-equation}
  \int \left( \prod_{i=1}^{d} T_i^{p_{i,1}(n)} \right) f_1 \cdot \ldots \cdot  \left( \prod_{i=1}^{d} T_i^{p_{i,k}(n)} \right) f_k d\mu = \nil_{\varepsilon}(n) + \nul_\varepsilon(n),  \quad n\in \N,  
\end{equation}
where $\nil_{\varepsilon}$ is an $\ell$-step generalized nilsequence bounded by $1$, and $\nul_\varepsilon$ is an $\varepsilon$-null-sequence. We refer to a decomposition of the form \cref{Frantzikinakis-equation} as an \textit{approximate decomposition}, whereas a decomposition of the form \cref{eq:proposal-bhk} as an \textit{exact decomposition}.

The approximate decomposition result of Frantzikinakis  for commuting transformations has since been extended in different directions. Frantzikinakis and Host \cite{FrantzikinakisHost2018}
extended this approximation principle to multivariable polynomial multicorrelation sequences, while Koutsogiannis \cite{Koutsogiannis18} obtained corresponding results for integer-part polynomial iterates. Le, Moreira and Richter~\cite{LeMoreiraRichter2021}
proved that the error term in \cref{Frantzikinakis-equation} can also be chosen with arbitrarily small absolute mean error along primes. They also established polynomial extensions of this
approximation result. Related refinements for integer-part polynomial iterates along primes were obtained by Koutsogiannis, Le, Moreira and
Richter \cite{KoutsogiannisLeMoreiraRichter21}.

A natural question is whether \cref{Frantzikinakis-equation} can be turned into an exact decomposition. Some partial results in this direction include the work of Moragues \cite[Theorem 1.4, Theorem 1.5]{Moragues2020PropertiesOM} and Donoso,
Moragues, Koutsogiannis and Sun \cite[Theorem 2.2]{DonosoFerreMoraguesKoutsogiannisSun2024}, in which they obtained exact decompositions for commuting transformations under suitable ergodicity assumptions. Frantzikinakis and Kuca \cite[Theorem~2.17]{FrantzikinakisKuca2025} established exact decompositions for arbitrary commuting transformations whenever the polynomial iterates are pairwise independent. A further breakthrough was achieved by Leng \cite[Theorem~1.6]{leng2025structuredextensions}, who resolved the linear case without any ergodicity assumptions. More recently, Frantzikinakis and Kuca
\cite[Theorem~1.6]{frantzikinakis2025ergodicaveragessparsecorners} obtained an exact decomposition for arbitrary pairs of polynomial iterates of two commuting transformations.

Despite the progress described above, the general polynomial problem for commuting transformations has remained unresolved for three or more transformations. More precisely, it has not been known whether, for arbitrary integer polynomials \(p_1,\ldots,p_d\), the associated multicorrelation sequence admits an exact decomposition into a generalized nilsequence and a null-sequence. This question has been raised repeatedly in the literature 
\cite[Question 1.1]{DonosoFerreMoraguesKoutsogiannisSun2024}, \cite[Problem 20]{Frantzikinakis11}, \cite[Remark after Theorem 1.1]{Frantzikinakis2015}, \cite[Section 2.7]{FrantzikinakisHost2018}, \cite[Remark after Theorem 1.6]{frantzikinakis2025ergodicaveragessparsecorners}, \cite[Page 636]{FrantzikinakisKuca2025}, \cite[Page 398]{Host_Kra_nilpotent_structures_ergodic_theory:2018}, \cite[Question 2]{KoutsogiannisLeMoreiraRichter21}, \cite[Problem 37]{Kuca2026Survey}, \cite[Question 3]{LeMoreiraRichter2021}.  In this paper, we resolve this problem in full generality. We establish an exact decomposition for polynomial multicorrelation sequences associated with an arbitrary finite family of commuting transformations, without imposing any ergodicity assumptions or restrictions on the iterating polynomials. In fact, our methods allow us to prove the main result in the generality of actions arising from nilpotent groups.
\begin{theorem}\label{main-result}
    Let $G$ be a nilpotent group. For every $d, k\in \N$, any polynomials $p_{i,j}\in \Z[t]$, $i\in [d]$, $j\in [k]$, every $G$-measure-preserving system $(X,\mathcal B,\mu,G)$, with the transformations $T_1,\ldots,T_d\in G$, and every $f_0,\ldots,f_k\in L^\infty(\mu)$ bounded by $1$, we have the decomposition
    $$\int f_0\cdot \left( \prod_{i=1}^{d} T_i^{p_{i,1}(n)} \right) f_1 \cdot \ldots \cdot  \left( \prod_{i=1}^{d} T_i^{p_{i,k}(n)} \right) f_k d\mu = \nil(n) + \nul(n),  \quad n\in \N$$
where $\nil$ is a generalized nilsequence bounded by $1$, and $\nul$ is a null-sequence.
\end{theorem}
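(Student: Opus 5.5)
We would prove \cref{main-result} by upgrading the approximate decomposition \cref{Frantzikinakis-equation} to an exact one. The upgrade rests on a compactness (``nilsequence-valued limit'') argument organized along the PET-induction scheme of Walsh's norm convergence proof. Write $a(n)$ for the multicorrelation sequence. The basic observation is that it suffices to produce, for every $\varepsilon>0$, decompositions $a=\nil_\varepsilon+\nul_\varepsilon$ with two properties: $\nul_\varepsilon$ is $\varepsilon$-null, and the $\nil_\varepsilon$ are \emph{coherent}, meaning $\nil_\varepsilon-\nil_{\varepsilon'}$ is itself uniformly small, say $\norm{\nil_\varepsilon-\nil_{\varepsilon'}}_\infty\le C(\varepsilon+\varepsilon')^{1/2}$. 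Given coherence, $\nil_{2^{-m}}$ is uniformly Cauchy, so it converges to a generalized nilsequence $\nil$. The difference $a-\nil$ is then $\varepsilon$-null for every $\varepsilon$, hence null. Boundedness by $1$ is obtained at the end by composing with the radial retraction onto the unit disk; this is a Lipschitz map, so it preserves generalized nilsequences and does not increase the null error, because $a$ itself is bounded by $1$.

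The key lemma for coherence is an ``inverse'' statement. If a generalized nilsequence $\psi$ of bounded degree $\ell$ (bounded by $2$) is $\delta$-null, then $\norm{\psi}_\infty$ is small in terms of $\delta$ and $\ell$. For a genuine nilsequence $F(g^n\Gamma)$ this follows from Leibman's equidistribution theorem. The orbit closure is a finite union of sub-nilmanifolds on which the orbit equidistributes along every progression and uniformly over shifted windows $[M,N]$. A small upper Banach density average of $|F|^2$ therefore forces $|F|$ to be small on the whole orbit closure, hence everywhere along the orbit. Passing to uniform limits is immediate. Consequently, two approximate decompositions $\nil_\varepsilon+\nul_\varepsilon=\nil_{\varepsilon'}+\nul_{\varepsilon'}$ force $\nil_\varepsilon-\nil_{\varepsilon'}$, which is a generalized nilsequence of degree $\ell$ (the degree is uniform in $\varepsilon$ by \cref{Frantzikinakis-equation}), to be $(2\varepsilon+2\varepsilon')$-null. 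It is therefore uniformly small, and coherence holds automatically. So for commuting transformations the theorem would follow from \cref{Frantzikinakis-equation} plus this rigidity of nilsequences.

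The real work is the nilpotent generalization, where \cref{Frantzikinakis-equation} is not available. The plan there is to reprove an approximate decomposition with a degree bound $\ell$ depending only on $G$, $d$, $k$ and the $p_{i,j}$. We would follow Walsh's argument. First, by his PET/van der Corput scheme for polynomial sequences in nilpotent groups, we would show that if $f_k$ is orthogonal to a suitable ``characteristic'' structure, then the averages $\frac1{N-M}\sum_n a(n)\overline{b(n)}$ vanish uniformly over all $\ell$-step nilsequences $b$ of bounded complexity. Here the structure is a finite-complexity family of dual-type functions produced by Walsh's induction on complexity. Next, we would decompose $f_k$ into a structured part, a finite combination of such dual functions, plus a small-norm remainder. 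We would then show that the correlation sequence of the structured part is, up to an $L^2$-Banach-small error, a nilsequence. This is done by expanding the dual functions as averages of products along polynomial orbits. The resulting sequences become averages of lower-complexity correlations, which by induction admit the decomposition, and averages of nilsequences against nilsequences remain generalized nilsequences of controlled degree.

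The main obstacle is this last step: making Walsh's qualitative convergence argument output a structural statement with uniform degree control in the nilpotent setting. Elements of $G$ do not commute, so the polynomial sequences $\prod_i T_i^{p_{i,j}(n)}$ have to be treated as polynomial maps into $G$, which is where Leibman's polynomial-sequence machinery would be used. What I would try first is to carry the induction with the ``nil + null'' property itself as the inductive invariant, closed under Cesàro averaging in an auxiliary variable $h$. This closure would follow from the coherence lemma: each averaged term's nil parts cohere, so their averages converge uniformly.
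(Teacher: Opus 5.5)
Your argument rests on the \emph{inverse} lemma that a $\delta$-null generalized nilsequence of degree $\ell$, bounded by $2$, has sup norm small in terms of $\delta$ and $\ell$ alone. This lemma is false already for $\ell=1$. Let $\alpha$ be irrational and let $F$ be a smooth bump on $\mathbb{T}$ with $0\le F\le 1$ and $\max F=1$, supported on an arc of length $\delta$; set $\psi(n)=F(n\alpha)$. By unique ergodicity of the rotation, the averages of $|\psi|^2$ over $[M+1,N]$ converge to $\int|F|^2\le\delta$ uniformly as $N-M\to\infty$, so $\psi$ is $\delta$-null. Yet $\sup_n|\psi(n)|=1$, because $(n\alpha)$ is dense.

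Your justification conflates smallness of $\int|F|^2$ over the orbit closure with smallness of $|F|$ on it. That step needs control of the Lipschitz constant of $F$, and the nilsequences $\nil_\varepsilon$ produced by approximate decompositions have no such control uniformly in $\varepsilon$. Consequently coherence is not automatic: for every $\delta$, the zero sequence has the $\delta$-approximate decomposition $0=\psi+(-\psi)$ with $\norm{\psi}_\infty=1$. So \cref{Frantzikinakis-equation} does not by itself imply the theorem; if it did, the exact decomposition would have been an immediate corollary of Frantzikinakis' result. Your last paragraph also derives the closure of the nil\,$+$\,null property under Ces\`aro averaging in $h$ from coherence, so that step is unsupported as well.

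The only rigidity available is the qualitative \cref{nil+nul-inequality}. It yields coherence (this is \cref{nil-plus-uniform-error}) only when the errors are small in sup norm, not merely in upper Banach density. The missing idea is how to arrange sup-small errors, and the paper does this inside Walsh's complexity induction.

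First, it splits $f_k$ along the closed span $\mathcal{B}_\g$ of $\g$-basic functions. These are uniform limits of averages of $g_k(n)^{-1}v_0\prod_{j<k}g_k(n)^{-1}g_j(n)v_j$ weighted by a multicorrelation $\mathcal{D}$ of the same system. The orthogonal part then gives an \emph{exactly} null correlation (take $\mathcal{D}=\overline{\cor}$). Approximating the projection in $L^2$ by finitely many basic functions costs only a sup-small error, since $|\int f_0\cdots g_k(n)h\,d\mu|\le\norm{h}_{L^2(\mu)}$.

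Second, each basic function is replaced by a window average of fixed length $M$. The error is uniform in $n$ because the limits in \cref{Weight-multicor-average} are uniform. This produces $\frac1M\sum_{m=1}^M\mathcal{D}(n+m)R_m(n)$, where $R_m$ is a correlation of the lower-complexity system $\g_m^*$ and decomposes by induction with $\norm{\nil_m}_\infty\le 1$.

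Third, the approximate decomposition \cref{Frantzikinakis-nilpotent} is applied only to the weight $\mathcal{D}$. Its $\eta$-null error becomes pointwise at most $\varepsilon$ once averaged over windows $n+[M]$ with $M$ large, by Cauchy--Schwarz. It is this window averaging built into the dual-type functions, not a quantitative rigidity of nilsequences, that turns density-small errors into uniformly small ones.

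Also, the nilpotent approximate decomposition is already available via Frantzikinakis' Remark 2.5. Re-deriving it through Walsh's PET scheme is therefore not where the difficulty lies.
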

Our techniques differ from previous exact decomposition results such as \cite{Bergelson_Host_Kra05,Leibman10,Leibman15,leng2025structuredextensions} where a structure theorem for ergodic averages is the main ingredient. In contrast, our work stems from the ideas of Walsh \cite{Walsh12} and can be seen as a natural continuation of Frantzikinakis' argument in \cite{Frantzikinakis2015}. This is the first method yielding an exact decomposition without the use of structure theory or a degree lowering strategy. 

\section*{Acknowledgments}
The authors thank Florian Richter and Ethan Ackelsberg for comments on an early draft of the article.

\noindent \textbf{Use of technology:} No AI tools were used to produce any of the results or the ideas in this article. The only instance of AI use was to create a referee-style report of the paper before submission. For this we used GPT-6 Astra Max, and only minor corrections were found. The authors assume responsibility for all content.

\section{Preliminaries}\label{preliminaries}
\subsection{Basic tools related to multicorrelation sequences}
Let $(X,\mathcal B,\mu)$ be a standard probability space and let $G$ be a group of measure-preserving transformations acting on $X$. We call $(X, \mu, G)$ a $G$-measure-preserving system. In the case $G$ is generated by $d$ commuting transformations $T_1,\ldots,T_d$, we will denote $(X,\mu, T_1, \ldots, T_d)$ and this will be called a $\Z^d$-system. 

Let $G$ be a nilpotent group and let $(X,\mu,G)$ be a $G$-measure-preserving system. The result of Walsh \cite{Walsh12} shows that for any $T_1,\ldots,T_d\in G$, integer-valued polynomials $p_{i, j}$ with $i \in [d]$, $j \in [k]$, and functions $f_1,\ldots,f_k\in L^\infty(\mu)$, the ergodic averages 
$$\frac{1}{N-M} \sum_{n=M+1}^N \prod_{j=1}^k T_1^{p_{1, j}(n)}\cdots T_d^{p_{d, j}(n)}f_j$$
converge in $L^2(\mu)$ as $N-M\to \infty$.

Consider for $j\in [k]$, $g_j(n)= T_1^{p_{1, j}(n)}\cdots T_d^{p_{d, j}(n)}$ with $T_1,\ldots,T_d\in G$ and $p_{i, j}:\Z\to \Z$ polynomials. A (polynomial) multicorrelation sequence in $(X,\mu,G)$ is a sequence $\cor$ of the form 
$$    \cor(n):= \int_{X} f_0\cdot  g_1(n)f_1 \cdot\ldots \cdot  g_k(n)f_k d\mu , \quad n\in \Z, $$
where $f_0,\ldots,f_k\in L^\infty(\mu)$.

We will use the nilpotent version of Frantzikinakis' decomposition result.
\begin{theorem}\label{Frantzikinakis-nilpotent}
    For every $s,d,k\in \N$, any polynomials $p_{i,j}\in \Z[t]$, $i\in [d]$, $j\in [k]$, there is $\ell\in \N$ such that for every measure-preserving system $(X,\mathcal{B},\mu,T_1,\ldots,T_{d})$ with the transformations $T_1,\ldots,T_{d}$ spanning a nilpotent group of class at most $s$, every $f_1,\ldots,f_k\in L^\infty(\mu)$ bounded by $1$, and $\varepsilon>0$ we have the decomposition 
\begin{equation*}
  \int \left( \prod_{i=1}^{d} T_i^{p_{i,1}(n)} \right) f_1 \cdot \ldots \cdot  \left( \prod_{i=1}^{d} T_i^{p_{i,k}(n)} \right) f_k d\mu = \nil_{\varepsilon}(n) + \nul_\varepsilon(n),  \quad n\in \N  
\end{equation*}
where $\nil_{\varepsilon}$ is an $\ell$-step nilsequence bounded by $1$ and $\nul_\varepsilon$ is an $\epsilon$-null-sequence.
\end{theorem}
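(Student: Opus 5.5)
We follow Frantzikinakis' argument for \cite[Theorem 1.2]{Frantzikinakis2015}, using Walsh's machinery as the nilpotent replacement for Host–Kra seminorm control. The overall strategy is a dichotomy. Either the sequence $\cor$ correlates with a nilsequence of bounded degree, and we subtract off that structured piece, or $\cor$ is small in a suitable uniformity norm on $\Z$, and then it contributes a null-like error. Iterating this and using boundedness in $\ell^2$-type averages gives the $\varepsilon$-approximation.

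\textbf{Step 1 (control by a Gowers-type norm).} Fix $s$, $d$, $k$ and the polynomials. Using van der Corput / PET induction adapted to nilpotent groups (as in Walsh \cite{Walsh12}), we show there is $\ell$ with the following property: for every bounded sequence $w$ with $\|w\|_{U^{\ell+1}(\Z)}=0$, taken in the uniform-density (Banach/F{\o}lner-along-intervals) sense,
$$\lim_{N-M\to\infty}\frac1{N-M}\sum_{n=M+1}^N w(n)\,\cor(n)=0 .$$
The natural way to get this is to write $w(n)\cor(n)$ as a correlation on a product system. Concretely, we take a Furstenberg system of $w$, i.e.\ a shift system $(Y,\nu,S)$ with $w(n)=\int h\cdot S^n h'\,d\nu$ in the appropriate averaged sense. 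We then adjoin $S$ as an extra transformation commuting with the nilpotent group, which keeps nilpotency class at most $s$. Running the PET scheme on the enlarged family, in which every iterate now involves $S^n$, bounds the average by a Host–Kra seminorm of $h$ for $S$ of some order depending only on the data. That seminorm is controlled by $\|w\|_{U^{\ell+1}}$.

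This is the main obstacle. The PET induction must terminate with a uniform bound independent of the system, and for nilpotent (noncommuting) $T_i$ the polynomial maps $n\mapsto \prod_i T_i^{p_{i,j}(n)}$ are not homomorphisms. We would follow Walsh's reduction and Leibman's framework of polynomial sequences in nilpotent groups, where the van der Corput step stays within the class of polynomial mappings of bounded degree. The key point is that the number of steps, and hence $\ell$, depends only on $s$, $d$, $k$ and the degrees.

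\textbf{Step 2 (inverse theorem and decomposition).} Let $\mathcal{A}$ be the closed subspace of the Hilbert-like seminormed space of bounded sequences, with inner product given by the uniform Cesàro average of $a\bar b$, spanned by $\ell$-step nilsequences. We project $\cor$ orthogonally, approximately, onto $\mathcal A$. By the inverse theorem for $U^{\ell+1}$ norms (Green–Tao–Ziegler, in its uniform-density form), the residual $\cor-\psi$ is nearly orthogonal to every $\ell$-step nilsequence. Hence its $U^{\ell+1}$ norm is small, and Step 1 applied with $w=\overline{\cor-\psi}$ gives that the average of $|\cor-\psi|^2$ is small. This is a standard energy-increment argument: for each $\varepsilon$ it yields a nilsequence $\nil_\varepsilon$ of degree $\ell$ with $\cor-\nil_\varepsilon$ an $\varepsilon$-null-sequence.

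\textbf{Step 3 (boundedness by 1).} Since $|\cor|\le1$, we replace $\nil_\varepsilon$ by $\Phi\circ\nil_\varepsilon$, where $\Phi$ is a smooth map of $\C$ onto the closed unit disk that is close to the radial retraction. Composing a nilsequence with a continuous function is again a nilsequence, after an approximation by smooth functions on the same nilmanifold. The retraction only decreases $|\cor-\nil_\varepsilon|$ pointwise, so the null bound survives. This completes the proof.
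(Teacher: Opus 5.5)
The paper does not prove this statement itself. It cites Frantzikinakis' Theorem~1.2 together with his Remark~2.5, which says the commuting-case argument carries over to nilpotent actions. Your outline has the right overall shape (seminorm control, then an inverse theorem, then a decomposition), and Step~3 is fine. The conclusion of Step~1 is also correct, though the mechanism you describe is off. A Furstenberg system writes $w(n)$ as $F(S^n\omega)$ at a generic point, not as $\int h\cdot S^nh'\,d\nu$, and the weighted average is not an integral over $Y\times X$. The seminorm bound comes instead from running van der Corput/PET directly with the weight, which gets differenced at each step, and finishing with a spectral bound by exponential sums of $\Delta_{\underline h}w$.

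The genuine gap is in Step~2, which is where the real work of the cited proof lies. It fails on three counts.
(i) The ``Hilbert-like space'' is not defined. For general bounded sequences the limit $\lim_{N-M\to\infty}\frac{1}{N-M}\sum_{n=M+1}^N a(n)\overline{b(n)}$ need not exist. Even for $\cor$ alone you would need to prove that $\lim_{N-M\to\infty}\frac{1}{N-M}\sum_{n=M+1}^N \cor(n)\overline{\psi(n)}$ exists for every nilsequence $\psi$, which is a nontrivial convergence theorem.
(ii) There is no ``uniform-density form'' of Green--Tao--Ziegler to quote. GTZ gives, for each long interval $I$ with $\norm{w}_{U^{\ell+1}(I)}\ge\delta$, a bounded-complexity nilsequence $\phi_I$ correlating with $w$ on $I$. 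But $\phi_I$ depends on $I$, and compactness of bounded-complexity nilsequences only gives pointwise limits, i.e.\ agreement on segments of length $o(|I|)$, not correlation over all of $I$. So ``nearly orthogonal to all $\ell$-step nilsequences $\Rightarrow$ small $U^{\ell+1}$ norm'' in uniform density is exactly the finitary-to-infinitary passage that must be proved. In the cited work, roughly speaking, this is where finitary decompositions on $\Z_N$ and a separate limiting argument come in.
(iii) Step~1 as you state it is qualitative. Step~2 uses it quantitatively, for $w=\overline{\cor-\psi}$, and $\psi$ comes from an $L^2$-approximate projection with no a priori $L^\infty$ bound.

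An example shows that Step~2 genuinely needs more than what Step~1 provides. Let $a(n)=e(\alpha_1 n)$ on odd blocks and $a(n)=e(\alpha_2 n)$ on even blocks, where $\alpha_1-\alpha_2\notin\Z$ and the block lengths grow super-exponentially.
\begin{itemize}
\item Any long interval essentially meets at most two blocks. So averages of $w\,a$ over it are controlled by exponential sums of $w$ over long subintervals, which the uniform $U^2$ seminorm controls. Hence $a$ satisfies the conclusion of Step~1 with $\ell=1$, even in quantitative form.
\item Yet $a$ is not within $\varepsilon<1/4$ in uniform density of any nilsequence $\psi$. If it were, the nilsequence $\psi(n)e(-\alpha_1 n)$ would have averages within $\sqrt{\varepsilon}$ of $1$ on long intervals inside odd blocks. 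It would also have averages within $\sqrt{\varepsilon}$ of $0$ on long intervals inside even blocks.
\item This is impossible, because uniform averages of nilsequences converge (Leibman).
\end{itemize}
So any valid argument must use further structure of $\cor$ in an essential and quantitative way, for instance the existence of its uniform correlations with nilsequences, and your sketch does not supply this.
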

The proof of \cref{Frantzikinakis-nilpotent} follows from \cite[Theorem 1.2]{Frantzikinakis2015} and \cite[Remark 2.5]{Frantzikinakis2015}. 

We will also need the following proposition, which says that ergodic averages with multicorrelation sequences as weights are uniform averages.
\begin{proposition}\label{Weight-multicor-average}
Let $G$ be a nilpotent group and $(X,\mu,T)$ a $G$-system. Let $g_1,\ldots,g_k,h_1,\ldots,h_l$ be polynomial sequences in $G$. For $u_0,\ldots,u_k\in L^\infty(\mu)$, set
    $$\mathcal{D}(n)=\int u_0 \cdot \prod_{j=1}^k h_j(n)u_j d\mu. $$
    Then, for functions $f_1,\ldots,f_k\in L^\infty(\mu)$, the uniform limit
    \begin{equation}\label{eq-limit-correlation-and-functions}
        \lim_{N-M \to \infty} \frac{1}{N-M} \sum_{n=M+1}^N \mathcal{D}(n) \cdot  \prod_{j=1}^k g_j(n)f_j 
    \end{equation}
    exists in $L^2(\mu) $.
\end{proposition}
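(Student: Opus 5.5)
Throughout, products and unspecified norms are taken in $L^2(\mu)$, and I allow the number of factors $l$ in $\mathcal{D}$ to differ from $k$. The plan is to reduce the weighted average \eqref{eq-limit-correlation-and-functions} to an unweighted polynomial multiple ergodic average on a product system, and then apply Walsh's theorem there.

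\textbf{Step 1 (the product system).} Write $\mathcal{D}(n)$ as an integral over a second copy of $X$, so that
$$\mathcal{D}(n)\prod_{j=1}^k g_j(n)f_j(x)=\int_X u_0(y)\prod_{i=1}^l h_i(n)u_i(y)\,d\mu(y)\cdot\prod_{j=1}^k g_j(n)f_j(x).$$
Now consider the system $(X\times X,\mu\times\mu, G\times G)$. The group $G\times G$ is nilpotent, and its action is measure preserving. Define the polynomial sequences in $G\times G$
$$\tilde g_j(n)=(g_j(n),\nul),\qquad \tilde h_i(n)=(\nul,h_i(n)),$$
where $\nul$ here denotes the identity element. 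These are again of the form $\prod_i \tilde T_i^{p_i(n)}$, with $\tilde T_i$ equal to $(T_i,\nul)$ or $(\nul,T_i)$. Set $F_j=f_j\otimes 1$ and $U_i=1\otimes u_i$.

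\textbf{Step 2 (applying Walsh).} By Walsh's theorem in $G\times G$, the averages
$$A_{M,N}=\frac{1}{N-M}\sum_{n=M+1}^N\prod_j \tilde g_j(n)F_j\prod_i\tilde h_i(n)U_i$$
converge in $L^2(\mu\times\mu)$ as $N-M\to\infty$.

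\textbf{Step 3 (conditioning onto the first coordinate).} Define
$$\Phi(H)(x)=\int H(x,y)\,u_0(y)\,d\mu(y).$$
This is a bounded linear operator from $L^2(\mu\times\mu)$ to $L^2(\mu)$, with norm at most $\|u_0\|_\infty$, by Cauchy--Schwarz in $y$. Applying $\Phi$ to $A_{M,N}$ gives exactly the average in \eqref{eq-limit-correlation-and-functions}. Since bounded operators preserve convergence, the uniform limit exists.

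\textbf{Main obstacle.} The only delicate point is checking that Walsh's theorem applies in the form needed. This means the sequences $\tilde g_j,\tilde h_i$ must be polynomial sequences in the nilpotent group $G\times G$ of the admissible type. It also means Walsh's result must be available for uniform averages (over $N-M\to\infty$) and for an arbitrary mixture of such polynomial sequences.

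Both are standard. Walsh's theorem is stated for polynomial sequences in nilpotent groups. The uniform version follows by applying it to shifted sequences $n\mapsto g(n+M)$, which remain polynomial of the same kind, together with the usual argument that the limit does not depend on the shift. If one prefers to keep the transformations in the form $T_1,\ldots,T_d$, the group generated by the elements $(T_i,\nul)$ and $(\nul,T_i)$ is nilpotent. So Step 2 is covered by the statement quoted in the preliminaries, applied with $2d$ transformations.
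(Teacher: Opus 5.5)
Your argument is correct and is essentially the paper's: both pass to $(X\times X,\mu\otimes\mu)$ with the nilpotent group $G\times G$, apply Walsh's theorem there, and return to $X$ by Cauchy--Schwarz in the $y$-variable. Your bounded operator $\Phi$ is exactly the paper's Cauchy--Schwarz estimate, and using $(g_j,e_G)$ and $(e_G,h_i)$ separately, instead of the paper's paired sequences $h_j\times g_j$, handles $l\neq k$ cleanly.

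One side remark needs correcting: uniformity does not follow from applying Walsh to the shifted sequences $n\mapsto g(n+M)$. Convergence for each fixed shift gives no uniformity in $M$. This is why the paper proves convergence along every F\o lner sequence and then invokes Zorin-Kranich's Lemma 3.5. What actually makes your Step 2 valid is citing the uniform form of Walsh's theorem from the preliminaries, which you also do.
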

For a sequence $a:\Z\to \C$, a F\o lner sequence $\Phi=(\Phi_N)_N$ and $N,M\in \N$ we will denote 
$$\sA_{N,M} a(n):= \frac{1}{|\Phi_N|}\sum_{n\in \Phi_N}  a(n)  - \frac{1}{|\Phi_M|}\sum_{n\in \Phi_M}a(n).     $$
\begin{proof}
    By \cite[Lemma 3.5]{Zorin-Kranich16}, it is enough to prove that for every F\o lner sequence $\Phi=(\Phi_N)_N$, the limit in \cref{eq-limit-correlation-and-functions} for averages along $\Phi$ exists. Fix a F\o lner sequence $\Phi=(\Phi_N)_N$. For $N,M\in \N$, we have that 
    \begin{align*}
        &\norm{\sA_{N,M} \mathcal{D}(n) \cdot  \prod_{j=1}^k g_j(n)f_j    }_{L^2(\mu)}^2 \\
        &= \int \left| \int  u_0(y) \cdot \left(  \sA_{N,M}  \prod_{j=1}^k h_j(n)u_j(y) \cdot  \prod_{j=1}^k g_j(n)f_j(x)   \right) d\mu(y)  \right|^2 d\mu(x)\\
        &\leq \norm{u_0}_{L^2(\mu)}^2 \int  \int  \left|  \sA_{N,M}  \prod_{j=1}^k h_j(n)u_j(y) \cdot  \prod_{j=1}^k g_j(n)f_j(x)   \right|^2  d\mu(y) d\mu(x),
    \end{align*}
    where we used the Cauchy-Schwarz inequality in the last step. We observe that 
    \begin{align*}
         \int  \int  \left|  \sA_{N,M}  \prod_{j=1}^k h_j(n)u_j(y) \cdot  \prod_{j=1}^k g_j(n)f_j(x)   \right|^2  d\mu(y) d\mu(x)&= \norm{A_{N,M}  \prod_{j=1}^k (h_j\times g_j)(n)\left(u_j\otimes f_j\right)}_{L^2(\mu\otimes \mu)}^2
    \end{align*}
    where we denote by $(h_j\times g_j)(n)$ the action of $G\times G$ in the product $X\times X$. Since clearly $h_j\times g_j$ is a polynomial sequence in the nilpotent group $G\times G$, we have by Walsh \cite[Theorem 5.1]{Walsh12} that 
    $$\lim_{\min\{N,M\}\to \infty}\norm{A_{N,M}  \prod_{j=1}^k (h_j\times g_j)(n)(u_j\otimes f_j)}_{L^2(\mu\otimes \mu)}=0, $$
    concluding that 
    $$ N\mapsto \frac{1}{|\Phi_N|} \sum_{n\in \Phi_N} \mathcal{D}(n) \cdot  \prod_{j=1}^k g_j(n)f_j$$
    is a Cauchy sequence in $L^2(\mu)$, and thus it converges.
\end{proof}
The next result is standard and it has been proved in many previous articles \cite{Bergelson_Host_Kra05,Leibman15, leng2025structuredextensions, Moragues2020PropertiesOM}. It states that in order to find a nil plus null decomposition, it is enough to find a nil plus null plus a uniform error decomposition. Even though it was first proved in \cite[Lemma 3.3]{Leibman15}, we will cite it in the form in which it is stated in \cite[Lemma A.7]{leng2025structuredextensions}.
\begin{proposition}\label{nil-plus-uniform-error}
Suppose, for every $\varepsilon>0$, a sequence $a(n)$ can be written as a sum of a generalized nilsequence $a_{\text{nil},\varepsilon}(n)$, a null-sequence $a_{\text{null},\varepsilon}(n)$, and a sequence $a_{\varepsilon}(n)$ which is uniformly bounded by $\varepsilon$. Then, $a(n)$ is the sum of a generalized nilsequence and a null-sequence.
\end{proposition}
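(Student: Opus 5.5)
The plan is to show that the nil components $a_{\text{nil},\varepsilon}$ form a Cauchy family in the uniform norm, let $\varepsilon\to 0$, and check that the remainder is null. Two facts are used throughout: the class of generalized nilsequences is closed under uniform limits (a diagonal argument), and it is closed under differences (take the product nilmanifold).

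\textbf{Step 1: a rigidity lemma.} If $\psi$ is a generalized nilsequence and $\psi=\nul+b$ with $\nul$ null and $\sup_n|b(n)|\le\eta$, then $\sup_n|\psi(n)|\le\eta$. Suppose instead that $|\psi(n_0)|>\eta+c$ for some $n_0$ and some $c>0$. Choose a genuine nilsequence $\phi(n)=F(g^n\Gamma)$ with $\|\psi-\phi\|_\infty<c/4$, so that $|\phi(n_0)|>\eta+3c/4$. A nilrotation on $G/\Gamma$ is distal, so every point is uniformly recurrent. Hence, by continuity of $F$, the set
\[
S=\{n:\ |\phi(n)|>\eta+c/2\}
\]
is syndetic. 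For $n\in S$ we get $|\psi(n)|>\eta+c/4$, and therefore $|\nul(n)|\ge |\psi(n)|-|b(n)|>c/4$. A syndetic set meets every sufficiently long interval $[M+1,N]$ in a proportion bounded below by a positive constant. This forces
\[
\limsup_{N-M\to\infty}\frac{1}{N-M}\sum_{n=M+1}^{N}|\nul(n)|^2>0,
\]
which contradicts $\nul$ being null.

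\textbf{Step 2: the Cauchy property.} For $\varepsilon,\delta>0$, subtracting the two decompositions of $a$ gives
\[
a_{\text{nil},\varepsilon}-a_{\text{nil},\delta}=\bigl(a_{\text{null},\delta}-a_{\text{null},\varepsilon}\bigr)+\bigl(a_\delta-a_\varepsilon\bigr).
\]
The left side is a generalized nilsequence. On the right, the first bracket is null, since a difference of null sequences is null by $|x+y|^2\le 2|x|^2+2|y|^2$. The second bracket is bounded by $\varepsilon+\delta$. Step 1 then gives $\|a_{\text{nil},\varepsilon}-a_{\text{nil},\delta}\|_\infty\le\varepsilon+\delta$. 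So $a_{\text{nil},1/m}$ converges uniformly to some $\psi$, and $\psi$ is a generalized nilsequence by closure under uniform limits.

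\textbf{Step 3: the remainder is null.} For each $m$,
\[
a-\psi=a_{\text{null},1/m}+a_{1/m}+\bigl(a_{\text{nil},1/m}-\psi\bigr).
\]
The first term is null. The other two terms are bounded uniformly by $1/m+2/m$, using Step 2 with $\delta\to 0$. Therefore
\[
\limsup_{N-M\to\infty}\frac{1}{N-M}\sum_{n=M+1}^{N}|a(n)-\psi(n)|^2\le 2\cdot 0+2\,(3/m)^2 .
\]
Letting $m\to\infty$ shows that $a-\psi$ is null, which proves the proposition.

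\textbf{Main obstacle.} The only substantive step is the rigidity lemma in Step 1, specifically the syndeticity of the set where the approximating nilsequence stays large. I would justify it through distality of nilrotations, which gives uniform recurrence of every point. Alternatively, one can use Leibman's result that each orbit closure is a finite union of sub-nilmanifolds on which the rotation is uniquely ergodic; this even gives a positive density of such $n$ uniformly over intervals.
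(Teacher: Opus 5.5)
Your argument is correct and is essentially the standard proof behind \cite[Lemma 3.3]{Leibman15} and \cite[Lemma A.7]{leng2025structuredextensions}, which the paper cites rather than reproves: show the nil parts are uniformly Cauchy via a rigidity estimate, pass to the uniform limit, and check that the remainder is null. Your Step~1 is exactly \cref{nil+nul-inequality} applied to $\psi$ and the null sequence $-\nul$, giving $\|\psi\|_\infty\le\|\psi-\nul\|_\infty=\|b\|_\infty\le\eta$, so it could simply be quoted; your distality/syndeticity proof of it is also valid, and the bound $3/m$ in Step~3 could be sharpened to $2/m$, which is harmless.
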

We remark that the formulation in \cite[Lemma A.7]{leng2025structuredextensions} fixes the step of the generalized nilsequence. The same proof gives the result as stated in \cref{nil-plus-uniform-error}. Alternatively, the formulation in \cite[Lemma 3.3]{Leibman15} also implies our formulation here.

We will also make use of the following lemma.
\begin{lemma}[{\cite[Lemma 3.2]{Leibman15}}]\label{nil+nul-inequality}
    Let $\nil$ be a generalized nilsequence and $\nul$ be a null-sequence. Then
    $$\norm{\nil}_\infty\leq \norm{\nil+\nul}_{\infty}. $$
\end{lemma}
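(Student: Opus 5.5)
The plan is to argue by contradiction, using the fact that the values of a nilsequence recur along a syndetic set and a null-sequence cannot be large on a syndetic set. Set $c=\norm{\nil+\nul}_\infty$, which we may assume finite, and suppose that $\norm{\nil}_\infty>c$. Then there are $n_0\in\Z$ and $\delta>0$ with $|\nil(n_0)|>c+4\delta$. Since $\nil$ is a uniform limit of nilsequences, we can pick a nilsequence $\psi(n)=F(g^nx)$ with $\norm{\nil-\psi}_\infty<\delta$. Here $G/\Gamma$ is a nilmanifold, $F$ is continuous, and $x=\Gamma$ (or any base point). Then $|F(g^{n_0}x)|>c+3\delta$.

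Next we use the dynamics of nilrotations. Let $Y=\overline{\{g^nx:n\in\Z\}}$ be the orbit closure. A classical result (Auslander, Parry, Leibman) says that the orbit closure of a point under a nilrotation is a minimal (indeed uniquely ergodic) subsystem. Consider the relatively open set
$$U=\{y\in Y: |F(y)|>c+2\delta\}.$$
It contains $g^{n_0}x$, so it is nonempty. By minimality, finitely many translates $g^{-m}U$, $|m|\le L$, cover the compact set $Y$. Hence the return set $S=\{n: g^nx\in U\}$ is syndetic, with gaps bounded by $2L+1$. For $n\in S$ we have $|\nil(n)|>c+\delta$, and therefore
$$|\nul(n)|\ \ge\ |\nil(n)|-|\nil(n)+\nul(n)|\ >\ \delta .$$

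Finally, syndeticity gives that every interval $(M,N]$ with $N-M$ large contains at least $(N-M)/(2L+1)-1$ elements of $S$. Consequently
$$\limsup_{N-M\to\infty}\frac{1}{N-M}\sum_{n=M+1}^N|\nul(n)|^2\ \ge\ \frac{\delta^2}{2L+1}>0,$$
which contradicts the assumption that $\nul$ is a null-sequence. Hence $\norm{\nil}_\infty\le\norm{\nil+\nul}_\infty$.

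The only nontrivial input is the minimality of orbit closures under nilrotations, which yields syndetic returns to any open set met by the orbit. I would quote this from the literature rather than reprove it. Everything else is elementary manipulation of uniform approximations.
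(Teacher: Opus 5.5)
Your proof is correct. The paper gives no argument for this lemma; it is quoted from \cite[Lemma~3.2]{Leibman15}. So there is no in-paper proof to compare against, and you are supplying a self-contained proof of a cited fact. Your route is the natural one:
\begin{itemize}
\item Uniformly approximate $\nil$ by a genuine nilsequence $F(g^nx)$.
\item Use that the times the nil-orbit returns to the open set $U=\{|F|>c+2\delta\}$ form a set of positive lower uniform density.
\item Observe that $|\nul|>\delta$ on that set, which contradicts the uniform-density definition of a null-sequence.
\end{itemize}
The argument does not need $\nul$ to be bounded.

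Your only external input is minimality of orbit closures of nilrotations. This holds for arbitrary, possibly disconnected, nilmanifolds, so it covers the generality the paper uses. One justification is that nilrotations are distal, and orbit closures in distal systems are minimal (Ellis). Alternatively, unique ergodicity of the orbit closure gives more directly that the return set has lower uniform density at least $\mu_Y(U)>0$, in place of your syndeticity bound $1/(2L+1)$.
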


\subsection{Polynomial sequences in nilpotent groups}
We adopt some of the notation introduced in \cite{Walsh12}. Let $G$ be a nilpotent group of measure preserving transformations of a probability space $(X,\mathcal{B},\mu)$. A $G$-sequence is a sequence $(g(n))_n$ of elements in $G$. A $G$-polynomial sequence is a sequence $(g(n))_n$ of the form
$$g(n)=T_1^{p_1(n)}\cdots T_d^{p_d(n)}, $$
for $T_1,\ldots,T_d\in G$ and $p_1, \ldots, p_d$ being integer-valued polynomials. 
Given two $G$-sequences $g$ and $h$, for each $m\in\N$, we set the $G$-sequence 
$$\langle g | h \rangle_m(n) :=g(n)g(n+m)^{-1}h(n+m). $$
An ordered tuple $\g=(g_1,\ldots,g_k)$ of $G$-sequences will be called a $G$-system. We say that two systems are equivalent if they consist of the same $G$-sequences (allowing repetition). For example, for $G$-sequences $g$ and $h$, the systems $(h,g,g)$, $(g,h)$, $(g,g,g,g,h)$ are equivalent. 

For $m\in \N$ and a $G$-system $\g=(g_1,\ldots,g_k)$ we denote its $m$-reduction by 
$$\g_m^*=(g_1,\ldots,g_{k-1}, \langle g_k | 1_G\rangle_m, \langle g_k | g_1\rangle_m,\ldots,\langle g_k | g_{k-1}\rangle_m ), $$
where we denote by $1_G$ the constant sequence equal to the identity $e_G$ in $G$ for every value $n$. 

For a system $\g$ we use the same complexity as in \cite[Definition 3.1]{Walsh12}, this is: $\g$ has complexity $0$ if it is equivalent to the trivial system $(1_G)$. Recursively, $\g$ has complexity $d\in \N$ if it is not of complexity $d'$ for any $0\leq d'<d$ and it is equivalent to some system $\h$ for which every reduction $\h_m^*$ has complexity $\leq d-1$. 

A key fact that we use, established by Walsh \cite[Theorem 4.2]{Walsh12}, is that every system $\g$ of $G$-polynomial sequences has finite complexity.

\section{The proof of the main result}
Let $G$ be a nilpotent group, $(X,\mu,G)$ be a $G$-measure-preserving system and $p_{i,j}\in \Z[t]$ be polynomials for $i\in [d]$, $j\in [k]$. Pick $T_1,\ldots,T_d\in G$. Consider for each $j\in [k]$, 
$$g_j(n)= T_1^{p_{1,j}(n)}\cdots T_d^{p_{d,j}(n)}.$$
Our goal is to show that for $f_0,\ldots,f_k\in L^\infty(\mu)$ bounded by $1$, the multiple correlation sequence $\cor$ defined by
\begin{equation}\label{multicorrelation-sequence}
    \cor(n)= \int_{X} f_0\cdot  g_1(n)f_1 \cdot\ldots \cdot  g_k(n)f_k d\mu , \quad n\in \Z,
\end{equation}
is equal to a generalized nilsequence plus a null sequence.
We argue by induction on the complexity of $\g = (g_1,\ldots,g_k)$. Observe that this result is trivial for a $G$-system of complexity $0$ as the constant sequence is a nilsequence. Assume that the desired decomposition holds for every polynomial system of strictly smaller complexity. Since equivalent systems give rise to the same class of multicorrelation sequences, we may replace $\g$ by an equivalent system, relabelling the functions accordingly. Thus, by the definition of complexity, we may assume that the complexity of $\g_m^*$ is strictly smaller than the complexity of $\g$ for every $m \in \N$.

A \textit{$\g$-basic function} on $X$ is a function $F$ of the form
\begin{equation}\label{basic-function}
    F:= \lim_{N-M\to \infty} \frac{1}{N-M} \sum_{n=M+1}^N \mathcal{D}(n) \cdot \left( g_k(n)^{-1} v_0  \cdot \prod_{j=1}^{k-1} g_k(n)^{-1}g_j(n) v_j  \right) , 
\end{equation}
where $v_0,\ldots,v_{k-1}\in L^\infty(\mu)$ are bounded by $1$ and $\mathcal{D}(n)$ is a multicorrelation sequence of the form \cref{multicorrelation-sequence}. We observe that the limit in \cref{basic-function} exists always and is uniform by \cref{Weight-multicor-average}.

The idea will be to apply a strategy similar to \cite{Walsh12}: we will split the space $L^2(\mu)$:
$$ L^2(\mu)=\mathcal{B}_\g \oplus \mathcal{B}_\g^\perp,$$
where $\mathcal{B}_\g $ is the closure of the space generated by $\g$-basic functions, and $\mathcal{B}_\g^\perp$ is its orthocomplement. In similar fashion to \cite{8203259}, $\g$-basic functions represent the analogue of reducible functions from \cite{Walsh12}. That is, we will see that replacing $f_k$ by a $\g$-basic function produces a correlation of a sequence of lower complexity, for which we can apply an induction process to deduce the desired decomposition. Meanwhile, replacing $f_k$ by a function in $\mathcal{B}_\g^\perp$ renders $\mathcal{C}$ to a null-sequence.

We begin by noticing that if $f_k\in \mathcal{B}_\g^\perp$, then
\begin{align*}
  &\lim_{N-M\to \infty} \frac{1}{N-M}\sum_{n=M+1}^N |\cor(n)|^2\\
  &=  \lim_{N-M\to \infty} \frac{1}{N-M}\sum_{n=M+1}^N \overline{\cor(n)} \int f_0 \cdot  g_1(n)f_1 \cdot \ldots\cdot g_k(n)f_k d\mu\\
    &=\lim_{N-M\to \infty}    \int f_k \cdot \frac{1}{N-M}\sum_{n=M+1}^N\overline{\cor(n)}  \left( g_k(n)^{-1} f_0  \prod_{j=1}^{k-1} g_k(n)^{-1}g_j(n) f_j \right) d\mu.
\end{align*}
Since the limit
\begin{equation*}
    \lim_{N-M\to \infty} \frac{1}{N-M}\sum_{n=M+1}^N \overline{\cor(n)}  \left( g_k(n)^{-1} f_0  \prod_{j=1}^{k-1} g_k(n)^{-1}g_j(n) f_j \right) 
\end{equation*}
exists uniformly in $L^2(\mu)$ by \cref{Weight-multicor-average}, and it is equal to a $\g$-basic function, we conclude that  
$$  \lim_{N-M\to \infty} \frac{1}{N-M}\sum_{n=M+1}^N |\cor(n)|^2=0. $$
Hence, the result follows in this case as $\cor$ is a null-sequence itself.

Now let us turn to the general case and take $f_k\in L^\infty(\mu)$. Let $\mathbf{P}_{\mathcal{B}_\g}(f_k)$ denote the projection of $f_k$ to $\mathcal{B}_\g$. The sequence
$$n\mapsto \tilde{\nul}(n):=\int f_0 \cdot g_1(n) f_1  \cdot \ldots\cdot  g_{k-1}(n) f_{k-1}\cdot g_k(n)(f_k-\mathbf{P}_{\mathcal{B}_\g}(f_k)) d\mu $$
is a null-sequence. This way, it is enough to prove the result with $\mathbf{P}_{\mathcal{B}_\g}(f_k)$ in place of $f_k$. Notice that $\mathbf{P}_{\mathcal{B}_\g}(f_k)$ may not be bounded by $1$. Nevertheless, for each $\varepsilon>0$, there are $\ell\in \N$, $\alpha_1,\ldots,\alpha_{\ell}\in \C$ and $F_1,\ldots,F_\ell\in L^\infty(\mu)$ basic functions bounded by $1$ such that 
$$\norm{\mathbf{P}_{\mathcal{B}_\g}(f_k)-\sum_{i=1}^{\ell} \alpha_i F_i}_{L^2(\mu)}\leq \varepsilon.$$
We observe that the sequence $(S_\varepsilon(n))_n$ defined by
$$ S_\varepsilon(n):=\int_{X} f_0\cdot  g_1(n)f_1 \cdot\ldots \cdot g_{k-1}(n)f_{k-1}\cdot g_k(n)\left(\mathbf{P}_{\mathcal{B}_\g}(f_k)-\sum_{i=1}^{\ell} \alpha_i F_i\right) d\mu, \quad n\in \N  $$
is such that 
$$\norm{S_\varepsilon}_{\infty}\leq \norm{\mathbf{P}_{\mathcal{B}_\g}(f_k)-\sum_{i=1}^{\ell} \alpha_i F_i}_{L^2(\mu)}\leq \varepsilon.  $$
Thus $S_\varepsilon$ is a sequence uniformly bounded by $\varepsilon$. Since 
$$\mathcal{C}(n)=\tilde{\nul}(n)+ S_\varepsilon(n)+ \sum_{i=1}^{\ell} \alpha_i \int f_0\cdot  g_1(n)f_1 \cdot\ldots \cdot  g_{k-1}(n)f_{k-1}\cdot g_k(n) F_i d\mu, $$
we have that if for each $i\in [\ell]$ we prove the result with $F_i$ in place of $f_k$, we will obtain generalized nilsequences $\nil_i$ bounded by $1$ such that 
$$\mathcal{C}(n)=\tilde{\nul}(n)+ S_\varepsilon(n)+ \sum_{i=1}^{\ell} \alpha_i \nil_i(n). $$
As this holds for every $\epsilon>0$, by \cref{nil-plus-uniform-error} we have that there are a generalized nilsequence $\nil$, and a null-sequence $\nul$ such that 
$$ \mathcal{C}(n)=\nil(n)+ \nul(n).$$
Since $\mathcal{C}$ is bounded by $1$, by \cref{nil+nul-inequality} we have that $\nil$ is also bounded by $1$, which would conclude the proof.

Hence, it is enough to prove the result assuming that $f_k$ is a basic function bounded by $1$. In what follows, we continue to denote the resulting correlation sequence by $\mathcal{C}$. Thus, let us write 
$$f_k=  \lim_{N-M\to \infty} \frac{1}{N-M} \sum_{n=M+1}^N \mathcal{D}(n) \cdot \left( g_k(n)^{-1} v_0  \cdot \prod_{j=1}^{k-1} g_k(n)^{-1}g_j(n) v_j  \right) $$
where $\mathcal{D}$ is a multicorrelation sequence of the form \cref{multicorrelation-sequence} and $v_0, \ldots,v_{k-1}\in L^\infty(\mu)$. Since the previous convergence is uniform, then we have that for $\varepsilon>0$, there exists $M\in \N$ such that for all $n\in \N$, we have
$$\norm{f_k - \frac{1}{M}\sum_{m\in n+[M]}\mathcal{D}(m) \cdot \left( g_k(m)^{-1} v_0  \cdot \prod_{j=1}^{k-1} g_k(m)^{-1}g_j(m) v_j  \right)}_{L^2(\mu)}\leq \varepsilon. $$
This way, defining the sequence $(s_\varepsilon(n))_{n\in \N}$ by
\begin{align*}
    s_\varepsilon(n)&= \int f_0\cdot   g_1(n)f_1 \cdot \ldots\cdot  g_{k-1}(n)f_{k-1}\cdot \\
    &g_k(n)\left(f_k - \frac{1}{M}\sum_{m\in [M]}\mathcal{D}(n+m) \cdot \left( g_k(n+m)^{-1} v_0  \cdot \prod_{j=1}^{k-1} g_k(n+m)^{-1}g_j(n+m) v_j  \right)\right) d\mu 
\end{align*}
we have that 
$$\norm{s_\varepsilon}_\infty \leq \norm{f_k - \frac{1}{M}\sum_{m\in n+[M]}\mathcal{D}(m) \cdot \left( g_k(m)^{-1} v_0  \cdot \prod_{j=1}^{k-1} g_k(m)^{-1}g_j(m) v_j  \right)}_{L^2(\mu)} \leq  \varepsilon. $$
Hence, $s_\varepsilon$ is a sequence uniformly bounded by $\varepsilon$. Replacing $f_k$ by this approximation and reordering the terms, we get that
\begin{align*}
    &\int f_0 \cdot g_1(n)f_1 \cdot \ldots\cdot  g_k(n)f_k d\mu\\
    &=s_\varepsilon(n)+  \frac{1}{M}\sum_{m\in [M]}\mathcal{D}(n+m)  \int f_0  \cdot g_1(n)f_1 \cdot \ldots\cdot  g_{k-1}(n)f_{k-1}\cdot \\
    &g_k(n) \left( g_k(n+m)^{-1} v_0  \cdot \prod_{j=1}^{k-1} g_k(n+m)^{-1}g_j(n+m) v_j \right) d\mu\\
    &= s_\varepsilon(n) +\frac{1}{M}\sum_{m=1}^M \mathcal{D}(n + m)R_m(n) , 
\end{align*}
where we defined for each $n \in \N$, $m\in [M]$:
\begin{align*}
    R_m(n) = \int f_0 \cdot  \left( \prod_{j=1}^{k-1} g_j(n) f_j\right) \cdot \langle g_k | 1_G \rangle_m(n) v_0 \cdot \left( \prod_{j=1}^{k-1}\langle g_k | g_j \rangle_m(n)v_j\right)  d\mu.
\end{align*}
We know that the reduction 
$$\g^*_m=(g_1,\ldots,g_{k-1},\langle g_k | 1_G \rangle_m, \langle g_k | g_1 \rangle_m,\ldots, \langle g_k | g_{k-1} \rangle_m) $$
has strictly less complexity than $(g_1,\ldots,g_k)$. Thus, by the induction hypothesis, we have that for each $m\in \N$, the multicorrelation sequence $R_m$ is equal to the sum of generalized nilsequence $\nil_m$ and a null-sequence $\nul_m$, yielding for $n\in \N$:
$$\int f_0 \cdot g_1(n)f_1 \cdot \ldots\cdot  g_k(n)f_k d\mu=  s_\varepsilon(n)+ \frac{1}{M}\sum_{m=1}^M \mathcal{D}(n + m)\nil_m(n) +  \frac{1}{M}\sum_{m=1}^M \mathcal{D}(n + m)\nul_m(n). $$
On the one hand, since $\mathcal{D}$ is bounded, by Cauchy-Schwarz, the sequence $$n\mapsto \frac{1}{M}\sum_{m=1}^M \mathcal{D}(n + m)\nul_m(n)$$ is a null-sequence. On the other hand, as $\mathcal{D}$ is a multicorrelation sequence, by \cref{Frantzikinakis-nilpotent} we have that there is a generalized nilsequence $\widetilde{\nil}$ and an $\eta$-null-sequence $(a_\eta(n))_n$ such that 
$$\mathcal{D}(n)= \widetilde{\nil}(n) + a_\eta(n), $$
where $\eta:=\varepsilon^2/2.$
Without loss of generality, assume that $M$ is big enough so that for each $n\in \N$:
$$ \frac{1}{M}\sum_{m= 1}^M |a_\eta(n+m)|^2 \leq \varepsilon^2 .  $$
Thus, we have that 
$$ \frac{1}{M}\sum_{m=1}^M \mathcal{D}(n + m)\nil_m(n) =  \frac{1}{M}\sum_{m=1}^M  \widetilde{\nil}(n + m)\nil_m(n) +  \frac{1}{M}\sum_{m=1}^M a_\eta(n+m)\nil_m(n).  $$
Since $\|\nil_m\|_\infty\leq 1$ for every
$m\in\mathbb N$ by the induction hypothesis, we observe that for each $n\in \Z$, $$\left|\frac{1}{M}\sum_{m=1}^M a_\eta(n+m)\nil_m(n) \right|\leq \left(\frac{1}{M}\sum_{m= 1}^M |a_\eta(n+m)|^2 \right)^{1/2} \leq \varepsilon. $$
Thus, the sequence $$n\mapsto \frac{1}{M}\sum_{m=1}^M a_\eta(n+m)\nil_m(n)$$
is uniformly bounded by $\varepsilon$. Since the sequence
$$ n\mapsto \frac{1}{M}\sum_{m=1}^M  \widetilde{\nil}(n + m)\nil_m(n) $$
is clearly a generalized nilsequence bounded by 1, as it is a convex combination of generalized nilsequences, we conclude that
\begin{equation}\label{MCS}
    n\mapsto \int f_0 \cdot g_1(n)f_1 \cdot \ldots\cdot  g_k(n)f_k d\mu
\end{equation}
is the sum of a generalized nilsequence, a null-sequence, and a sequence uniformly bounded by $2\varepsilon$. As this holds for every $\varepsilon>0$, by \cref{nil-plus-uniform-error}, we conclude that the sequence in \cref{MCS} is the sum of a generalized nilsequence and a null-sequence, thus finishing the proof.

\footnotesize{
\bibliographystyle{abbrv}
\bibliography{refs}
}
\bigskip
\noindent
Jovan Andreevski\\
\textsc{{\'E}cole Polytechnique F{\'e}d{\'e}rale de Lausanne} (EPFL)\par\nopagebreak
\noindent
\href{mailto:felipe.hernandezcastro@epfl.ch}
{\texttt{jovan.andreevski@epfl.ch}}

\bigskip
\noindent
Felipe Hernández\\
\textsc{{\'E}cole Polytechnique F{\'e}d{\'e}rale de Lausanne} (EPFL)\par\nopagebreak
\noindent
\href{mailto:felipe.hernandezcastro@epfl.ch}
{\texttt{felipe.hernandezcastro@epfl.ch}}

\end{document}